\pgfplotsset{compat=1.14}
\newcommand{\dig}[1]{\mbox{\texttt{#1}}}
\newcommand{\rl}{\mathsf{RunL}}
\newcommand{\lsa}{\mathsf{LSA}}
\renewenvironment*{example}[0]{\refstepcounter{example}\smallskip\noindent\textbf{Example~\theexample.\xspace}}{}
\DeclareFontFamily{U}{mathb}{\hyphenchar\font45}
\DeclareFontShape{U}{mathb}{m}{n}{
      <5> <6> <7> <8> <9> <10> gen * mathb
      <10.95> mathb10 <12> <14.4> <17.28> <20.74> <24.88> matha12
      }{}
\DeclareSymbolFont{mathb}{U}{mathb}{m}{n}
\DeclareMathSymbol{\downuparrows}{3}{mathb}{"D7}
\DeclareMathSymbol{\bft}{3}{mathb}{"FD}
\begin{document}

\title{Stuttering Conway Sequences Are Still Conway Sequences}

\author{%
    \'Eric Brier\inst{2} \and Rémi Géraud-Stewart\inst{1} \and David Naccache\inst{1}\and Alessandro Pacco\inst{1} \and Emanuele Troiani\inst{1}
} 

\institute{
    ÉNS (DI), Information Security Group,
CNRS, PSL Research University, 75005, Paris, France.\\
45 rue d'Ulm, 75230, Paris \textsc{cedex} 05, France\\
\email{\url{remi.geraud@ens.fr}},~~~~  
\email{\url{given\_name.family\_name@ens.fr}}
	\and
	Ingenico Laboratories, 75015, Paris, France.\\
	\email{\url{eric.brier@ingenico.com}}\\
}
\maketitle

\begin{abstract}
A look-and-say sequence is obtained iteratively by reading off the digits of the current value, grouping identical digits together: starting with \dig{1}, the sequence reads: \dig{1}, \dig{11}, \dig{21}, \dig{1211}, \dig{111221}, \dig{312211}, etc. (\textsf{OEIS A005150}).
Starting with any digit $d \neq {\dig{1}}$ gives Conway's sequence: $d$, ${\dig{1}}d$, ${\dig{111}}d$, ${\dig{311}}d$, ${\dig{13211}}d$, etc. (\textsf{OEIS A006715}). Conway popularised these sequences and studied some of their properties\cite{conway1987weird}.

In this paper we consider a variant subbed \enquote{\emph{look-and-say again}} where digits are repeated twice. We prove that the \enquote{\emph{look-and-say again}} sequence contains only the digits ${\dig{1}},{\dig{2}},{\dig{4}},{\dig{6}},d$, where $d$ represents the starting digit. Such sequences decompose and the ratio of successive lengths converges to Conway's constant.

In fact, these properties result from a commuting diagram between look-and-say again sequences and \enquote{classical} look-and-say sequences. Similar results apply to the \enquote{look-and-say three times} sequence.
\end{abstract}

\section{Introduction}

The look-and-say (\textsf{LS}) sequence \cite{conway2012book}, also known as the \emph{Morris} or the \emph{Conway sequence} \cite{conway1987weird,hilgemeier1996one,ekhad1997proof} is a recreational integer sequence having very intriguing properties.\smallskip

A \textsf{LS} sequence is obtained iteratively by reading off the digits of the current value, and counting the number of digits in groups of the identical digit.\smallskip

Starting with \texttt{1}, the sequence reads (\textsf{OEIS A005150}): \texttt{1}, \texttt{11}, \texttt{21}, \texttt{1211}, \texttt{111221}, \texttt{312211}, etc. Starting with any digit $d \neq {\dig{1}}$ gives Conway's sequence (\textsf{OEIS A006715}): $d$, ${\dig{1}}d$, ${\dig{111}}d$, ${\dig{311}}d$, ${\dig{13211}}d$, etc. Conway popularised these sequences and studied some of their properties. For example, an \textsf{LS} sequence contains only the digits $\dig{1}, \dig{2}, \dig{3}$, and satisfies a so-called \emph{cosmological decay} \cite{ekhad1997proof}, if $L_n$ denotes the number of digits of the $n^{\mbox{\scriptsize{th}}}$ term of the sequence, then 
\begin{equation*}
\lim _{n\to \infty }{\frac {L_{n+1}}{L_{n}}}=\lambda\simeq 1.303577
\end{equation*}
where $\lambda$ is the only real root of a degree-$71$ polynomial \cite[§6.12]{finch2003mathematical}. 
Conway showed that $\exists N\in\mathbb{N}^*$ such that every term of the \textsf{LS} term decays in at most $N$ rounds to a compound of \enquote{common} and \enquote{transuranic} terms.

Following Conway's work, numerous variants of \textsf{LS} sequences were proposed and studied.
For instance, Pea Pattern sequences \cite{pea}, Sloane's sequences\cite{slo} or Kolakoski sequences \cite{kola,kola2}.
In this paper we consider a new \textsf{LS} sequence and study some of its properties.
The concerned variant, called \textit{\enquote{look-and-say again}} sequence, consists in repeating each \textsf{LS} digit twice. We prove that the such sequences contain only the digits ${\dig{1}},{\dig{2}},{\dig{4}},{\dig{6}},d$, where $d$ is the starting digit.

\section{Notations and definitions}
In this paper we assume that numbers are written in base 10. Any integer $T$ can thus be written $T = t_1t_2 \cdots t_k$ with $t_1, \dotsc, t_k \in \{{\dig{0}}, {\dig{1}}, \dotsc, {\dig{9}}\}$. To avoid any ambiguity, $ab$ will denote the \emph{concatenation} of the numbers $a$ and $b$; accordingly $a^b$ indicates that a digit $a$ is repeated $b$ times. If we want to emphasise concatenation we use $a\|b$ instead of $ab$.

\begin{definition}[Run-length representation]
Let $T\in \mathbb{N}^*$, we can write 
\begin{equation*}
    T = \underbrace{a_1\ldots a_1}_{n_1}\underbrace{a_2\ldots a_2}_{n_2}\ldots\underbrace{a_k\ldots a_k}_{n_k}
\end{equation*}
with $a_1\neq a_2, a_2\neq a_3, \dotsc, a_{k-1}\neq a_{k}$. The \emph{run-length representation} of $T$ is the sequence $\rl(T)= a_1^{n_1}a_2^{n_2} \cdots a_k^{n_k}$. Conversely, any finite sequence of couples $(a_i, b_i)_i$ where $a \in \mathbb N^*$ and ${\dig{0}}\leq b_i\leq {\dig{9}}$ is such that $b_{i-1}\neq b_i\neq b_{i+1}$, corresponds to an integer with run-length representation $(a_i^{b_i})_i$. 
\end{definition}
Note that the run-length representation of an integer is unique. 

\begin{definition}[Pieces]
If $N = (a_i^{b_i})$ is a run-length encoded integer, we call each $a_i^{b_i}$ a \emph{piece} of $N$.
\end{definition}

\begin{definition}[Look-and-say-again sequence]
Let $T_0$ be a decimal digit, and for each $T_n$ define
\begin{equation*}
    T_{n+1} = n_1n_1a_1a_1n_2n_2a_2a_2\cdots n_kn_ka_ka_k.
\end{equation*}
where $(a_i^{n_i})_i = \rl(T_n)$. We call the sequence $(T_k)_{k \in \mathbb N}$ the \emph{look-and-say-again sequence} of seed $T_0$, and denote it by $\lsa(T_0)$.
\end{definition}

\begin{example}

\noindent
$
\lsa(1)=
\small{
    {\dig{1}}\to
    {\dig{1111}}\to
    {\dig{4411}}\to
    {\dig{22442211}}\to
    {\dig{2222224422222211}}\to
    {\dig{6622224466222211}}\to \dotsc
}$

\noindent
$\lsa(2)= 
\small{
{\dig{2}}\to
{\dig{1122}}\to
{\dig{22112222}}\to
{\dig{222222114422}}\to
{\dig{6622221122442222}}\to
{\dig{226644222211222222444422}}\to
\dotsc}$
\end{example}

\section{The look-and-say-again sequence}

\begin{theorem}[Digits of $\lsa$]
Only the digits $\dig{1}, \dig{2}, \dig{3}, \dig{4}, \dig{6}, d$ appear in $\lsa(d)$.
\end{theorem}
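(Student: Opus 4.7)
The plan is to prove the theorem by induction on $n$, simultaneously tracking two kinds of data about $T_n$: its set of digits, and the set of multiplicities appearing in $\rl(T_n)$. The key observation is that if $(a_i^{n_i})_i = \rl(T_n)$, then the digits of $T_{n+1}$ are exactly the digits used among the $a_i$'s (which are already digits of $T_n$) and among the $n_i$'s (the multiplicities of $T_n$). Hence the induction on digits will close as soon as the multiplicities are sufficiently constrained at each step.

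Accordingly, the central step will be a lemma: for every $n \geq 1$, every multiplicity in $\rl(T_n)$ lies in $\{2,4,6\}$. By construction, $T_n$ is the string $c_1c_1c_2c_2\cdots c_{2k}c_{2k}$, where $c_{2i-1} = n_i$, $c_{2i} = a_i$, and $(a_i^{n_i})_i = \rl(T_{n-1})$. A run of length $2\ell$ in $T_n$ therefore corresponds to $\ell$ consecutive $c_j$'s that all agree, so multiplicities are automatically even and at least $2$. To forbid $\ell \geq 4$, I would observe that any four consecutive positions among $c_1,\ldots,c_{2k}$ contain two even indices, necessarily of the form $2i$ and $2i+2$; the corresponding entries $a_i, a_{i+1}$ are distinct because $\rl(T_{n-1})$ is a valid run-length representation. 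Hence every multiplicity in $\rl(T_n)$ is even and at most $6$.

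With this lemma in hand, the main statement follows by induction on $n$. The base cases $T_0 = d$ and $T_1 = \dig{11}dd$ use only digits in $\{\dig{1},d\} \subseteq \{\dig{1},\dig{2},\dig{4},\dig{6},d\}$. For the inductive step with $n \geq 1$, the digits of $T_{n+1}$ are contained in the union of the digits of $T_n$ (contained in $\{\dig{1},\dig{2},\dig{4},\dig{6},d\}$ by the inductive hypothesis) and the multiplicities of $\rl(T_n)$ (contained in $\{\dig{2},\dig{4},\dig{6}\}$ by the lemma), hence again in $\{\dig{1},\dig{2},\dig{4},\dig{6},d\}$, itself a subset of the digit set claimed by the theorem.

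The main obstacle is the cap $\ell \leq 3$ in the key lemma. The parity and lower bound of $2$ on multiplicities come for free from the doubling inherent in $\lsa$, but the upper bound of $6$ hinges essentially on the distinctness constraint $a_i \neq a_{i+1}$ enforced by run-length representations, and turning that into a rigorous argument requires the combinatorial observation above about how $a$-positions and $n$-positions interleave in the sequence $(c_j)_j$.
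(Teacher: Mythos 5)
Your proof is correct, and the decisive step is organised genuinely differently from the paper's. Both arguments ultimately run the same simultaneous induction --- digits of $T_n$ lie in $\{\dig{1},\dig{2},\dig{4},\dig{6},d\}$ and multiplicities of $\rl(T_n)$ lie in $\{2,4,6\}$ --- but where the paper establishes the multiplicity bound by enumerating, case by case, the pieces that $n_i$ and $a_i$ can fall into (separate cases for boundary pieces, interior pieces, and then a second, dedicated analysis of the trailing $dd$ when $d \notin \{\dig{1},\dig{2},\dig{4},\dig{6}\}$), you get it in one stroke from the parity observation: writing $T_{n+1} = c_1c_1\cdots c_{2k}c_{2k}$ with $c_{2i}=a_i$ and $c_{2i-1}=n_i$, any four consecutive blocks $c_j,\dotsc,c_{j+3}$ contain two even-indexed blocks $c_{2i}$ and $c_{2i+2}$, i.e.\ $a_i$ and $a_{i+1}$, which differ by definition of $\rl$; hence no run exceeds six digits, and evenness and the lower bound $2$ come for free from the doubling. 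This buys two things. First, it eliminates the paper's entire second half: your multiplicity lemma never constrains the \emph{values} of the digits, so an exceptional seed $d$ needs no special treatment beyond sitting passively in the digit set, whereas the paper must separately verify that the trailing $dd$ never corrupts its invariant $P$. Second, it makes transparent why the bound is exactly $6$ and why the \enquote{say three times} variant still works (the same argument gives runs of length at most $9$, still a single digit) while \enquote{say four times} fails. One point you should make explicit in a final write-up: decomposing $T_{n+1}$ into single-symbol blocks $c_j$ presupposes that each $n_i$ is itself a single decimal digit; this is supplied exactly by the inductive hypothesis on $T_n$ (multiplicities in $\{2,4,6\}$, or equal to $1$ for $T_0$), so your simultaneous induction does close, but the lemma for step $n+1$ genuinely depends on the multiplicity bound at step $n$ and is not induction-free.
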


\begin{proof}
Let $n \in \mathbb N$ and $(a_i^{n_i})_i = \rl(n)$, we write
\begin{equation*}
P(n):=\{\forall i, \,a_i \in \{\dig{1}, \dig{2}, \dig{4}, \dig{6}\} \text{ and } n_i \in \{2, 4, 6\}\}
\end{equation*}
(the matter of $d$ will be settled further down). Assume that $P(T_n)$ is true, and let $a_i^{n_i} \in \rl(T_n)$.
We have four situations:
\begin{enumerate}[\text{Case} 1:]
    \item  $\rl(T_n) = a_i^{n_i}$, in other terms $T_n = a_ia_i \cdots a_i$ and there is no other digit. Then the next term in the sequence is $T_{n+1}=n_in_ia_ia_i$, which clearly satisfies $P(T_{n+1})$ since $a_i\in\{\dig{1},\dig{2},\dig{4}, \dig{6}\}$ and $n_i\in\{2, 4, 6\}$. 

\item $i=1$, i.e., $T_n$ starts with the repeated digits $a_i$. In this case
\begin{equation*}
T_{n+1}=n_1n_1a_1a_1n_{2}n_{2}a_{2}a_{2} \cdots
\end{equation*}
with $a_k\neq a_{k+1}$ for all $k$. It is clear that no $a_{k}$ can be contained in a piece that also contains $a_{k+1}$, therefore the possible pieces that $n_1$ and $a_1$ can take part to are either $n_1n_1a_1a_1n_{2}n_{2}$ (in the case $n_1=a_1=n_{2}$), or $n_1n_1a_1a_1$ (in the case $n_1=a_1\neq n_{2}$), or $n_1n_1$ and $a_1a_1$ / $a_1a_1n_{2}n_{2}$ (respectively in the cases $n_1\neq a_1\neq n_{2}$ and $n_1\neq a_1=n_{2}$). 

In conclusion $a_1$ and $n_1$ generate pieces made of digits in $\{\dig{1},\dig{2},\dig{4}, \dig{6}\}$ with multiplicity either 2, 4, or 6. 
\item $T_n$ ends with $a_i$; this is analogous to case 2 above.

\item The piece $a_i^{n_i}$ is neither at the end nor at the beginning of $T_n$. The next term in the sequence is:
\begin{equation*}
T_{n+1}=\cdots a_{i-2}n_{i-1}n_{i-1} a_{i-1}a_{i-1}n_in_ia_ia_in_{i+1}n_{i+1}a_{i+1}a_{i+1}\cdots
\end{equation*}
By definition of the run-length representation, $a_{i-2},a_{i-1},a_i,a_{i+1}$ contains no consecutive values, hence the possible pieces resulting for $n_i$ and $a_i$ are:
\begin{enumerate}
    \item $n_{i-1}n_{i-1}a_{i-1}a_{i-1}n_{i}n_i$, and either $a_ia_i$ or $a_ia_in_{i+1}n_{i+1}$
    \item $a_{i-1}a_{i-1}n_{i}n_i$, and either $a_ia_i$ or $a_ia_in_{i+1}n_{i+1}$
    \item $n_{i}n_i$, and either $a_ia_i$ or $a_ia_in_{i+1}n_{i+1}$
    \item $n_in_ia_ia_i$ or $n_in_ia_ia_in_{i+1}n_{i+1}$
    \item $n_in_i$ and $a_ia_i$
\end{enumerate}
 In each case, since $\forall k, \,a_k\in\{\dig{1},\dig{2},\dig{4},\dig{6}\}$ and $n_k\in\{2, 4, 6\}$,
 we see that $n_i$ and $a_i$ can appear only in pieces that are of multiplicity 2, 4, or 6, and which contain numbers $\in\{\dig{1},\dig{2},\dig{4},\dig{6}\}$.
\end{enumerate}
If $T_0 = d \in \{\dig{1},\dig{2},\dig{4},\dig{6}\}$, then $P(T_0)$ holds and by the above case exhaustion argument $P(T_n)$ hold for all $n$.

It remains to discuss the case $T_0 = d \notin \{\dig{1},\dig{2},\dig{4},\dig{6}\}$. Writing the first few terms of the resulting sequence shows that this is easily dealt with:
 \begin{align*}
      T_0 & =  d 
      & T_1 & = \dig{11}dd 
      & T_2 & = \dig{221122}dd
      & T_3 & = \dig{22222211222222}dd & \cdots
 \end{align*}
 Indeed, save for the first term, the digit $d$ only appears as $dd$ at the end of $T_n$. The rest of $T_n$ satisfies $P(T_n)$ discussed previously.
 
 To prove this, assume that $T_n = k \| dd$, which means that $T_n$ starts with an integer $k$ and ends with the two digits $dd$, and further assume that $P(k)$ is true and $k$'s last piece is $\dig{2}^k$, with $k\in\{2,4,6\}$. 
 Then the next term in the sequence is $T_{n+1} = k' \| \dig{22}dd$, where $k'$ is an integer that ends with the digit \dig{2} and such that $P(k')$ is true (thanks to what we have proved in the first part of the theorem). Let $S = k'\|\dig{22}$. Consider three cases, as a function of the last piece of $k$, denoted $\dig{2}^{\omega}$:
 \begin{center}
\begin{tabular}{ccccccr}
$\omega={2}$&$\Rightarrow k'$ ends with &$\dig{2222}$ 
&$\Rightarrow S$ ends with & $\dig{222222}$&$\Rightarrow P(S) \mbox{~holds}$\\
$\omega={4}$&$\Rightarrow k'$ ends with &$\dig{4422}$ 
&$\Rightarrow S$ ends with & $\dig{2222}$&$\Rightarrow P(S) \mbox{~holds}$\\
$\omega={6}$&$\Rightarrow k'$ ends with &$\dig{6622}$ 
&$\Rightarrow S$ ends with & $\dig{2222}$&$\Rightarrow P(S) \mbox{~holds}$\\
 \end{tabular}
\end{center}
 Since $P(k')$ holds, the only problem in $S$ was at the interface between the ending \dig{2}'s of $k'$ and the couple \dig{22} at the end of $S$. With the exhaustion argument above we have shown that in each possible case $P(S)$ holds. Finally the number $S\|dd$ is such that $d$ only appears as a couple $dd$ at the end, whereas $S$ is made of digits belonging to $\{\dig{1},\dig{2},\dig{4},\dig{6}\}$ with multiplicities in $\{2, 4, 6\}$.
 
$T_0$ and $T_1$ contain only digits $\in\{\dig{1}, \dig{2}, \dig{4}, \dig{6},d\}$. Since $T_2$ is of the form $k\|dd$ with $P(k)$ true and $k$'s last digit being $\dig{2}$, the above argument shows that all subsequent terms in the sequence can be written in this way, and the proof is completed.\qed 
\end{proof}

\begin{corollary}
If $T_0 = d \neq \dig{1}, \dig{2}$ then $\lsa(T_0)$ gives the same sequence, save for the two last digits of each term which are $dd$.
\end{corollary}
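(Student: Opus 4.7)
The plan is to prove by induction on $n \geq 1$ that each term of $\lsa(d)$ admits a decomposition $T_n = B_n \| dd$, where $B_n$ is a string depending only on $n$ (not on the seed $d$, as long as $d \neq \dig{1}, \dig{2}$) and whose last digit is different from $d$.

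For the base case $n = 1$, a direct computation from $T_0 = d$ (whose run-length representation is $d^1$) yields $T_1 = \dig{11}dd$, hence $B_1 = \dig{11}$. This decomposition is genuine precisely because $d \neq \dig{1}$ prevents the count $\dig{11}$ from merging with the tail $dd$; the exceptional case $\lsa(\dig{1})$, with $T_1 = \dig{1111}$, shows what happens otherwise.

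For the inductive step, assume $T_n = B_n \| dd$ with the last digit of $B_n$ different from $d$. Then $\rl(T_n) = \rl(B_n) \cdot d^2$, i.e.\ the tail piece $d^2$ does not merge with the last piece of $B_n$. Since the look-and-say-again transformation acts piecewise on the run-length representation, the tail piece $d^2$ contributes the block $\dig{22}dd$ to $T_{n+1}$, while the pieces of $\rl(B_n)$ contribute exactly the look-and-say-again transform of $B_n$. Hence $T_{n+1} = B_{n+1} \| dd$, where $B_{n+1}$ is obtained by applying look-and-say-again to $B_n$ and appending $\dig{22}$. This recipe refers only to $B_n$, so $B_{n+1}$ depends only on $n$. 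Moreover, $B_{n+1}$ ends with the appended $\dig{22}$, so its last digit is $\dig{2}$, which differs from $d$ because $d \neq \dig{2}$, and the induction carries through.

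The main obstacle is ensuring that the decomposition $\rl(T_n) = \rl(B_n) \cdot d^2$ holds at every step, i.e.\ that the tail piece $d^2$ never merges with the last piece of $B_n$. This is exactly why both excluded seeds are needed: $d = \dig{1}$ is ruled out because $B_1$ ends in $\dig{1}$, and $d = \dig{2}$ is ruled out because $B_n$ ends in $\dig{2}$ for every $n \geq 2$. It is worth noting that for $d \in \{\dig{4}, \dig{6}\}$ the body $B_n$ may well contain $d$ in its interior, but this causes no problem, since the evolution of $B_n$ is specified purely in terms of the string $B_n$ with no reference to $d$; thus all seeds $d \neq \dig{1}, \dig{2}$ produce the same body at each step.
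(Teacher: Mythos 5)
Your induction on the decomposition $T_n = B_n \| dd$, with $B_1 = \dig{11}$ and $B_{n+1}$ obtained by applying the transformation to $B_n$ and appending $\dig{22}$ (so that $B_n$ ends in a digit $\neq d$ and the tail piece $d^2$ never merges), is correct and is essentially the argument the paper runs inside the second half of the theorem's proof, where $T_n$ is written as $k\|dd$ with $k$ ending in $\dig{2}$. Your explicit observation that the recursion for $B_n$ never references $d$ --- so the body is literally the same string for every seed, including $d \in \{\dig{4},\dig{6}\}$, which the theorem's tail argument formally excludes --- is a worthwhile clarification of why the corollary holds in the generality stated.
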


\begin{remark}
The length sequences for $T_0 = \dig{1}, \dig{2}, d$ are respectively:
\begin{align*}
    & 1, 4, 4, 8, 16, 16, 24, 40, 48, 56, 88, 104, 120, 176, 224, 280, 392, 520, 648, 864, 1168 \\
    & 1, 4, 8, 12, 16, 24, 32, 40, 48, 72, 92, 112, 156, 204, 264, 352, 464, 592, 784, 1036, 1320 \\
    & 1, 4, 8, 16, 16, 24, 40, 48, 56, 88, 104, 120, 176, 224, 280, 392, 520, 648, 864, 1168, 1432
\end{align*}
\end{remark}

\begin{remark}
For all seeds $s$, $\lsa(d)$ grows to infinity, namely the ratio of lengths $\lambda_n = \ell_{n+1}/\ell_n$ for two consecutive terms of sequence (which is between $1$ and $2$) tends towards Conway's constant $\lambda \simeq 1.303577$, regardless of the seed $T_0$. The following numerical simulation backs up this intuition:
\begin{center}
\begin{tikzpicture}
\begin{semilogxaxis}[legend cell align = left,legend pos = north east,xlabel=$n$,ylabel=$\ell_{n+1}/\ell_n$,width=\textwidth,height=5cm]
\addplot[color=red, mark=+] table[x=i,y=r1] {ratios.dat};
\addlegendentry{$T_0 = 1$};
\addplot[color=blue, mark=x] table[x=i,y=r2] {ratios.dat};
\addlegendentry{$T_0 = 2$};
\addplot[color=green!50!black, mark=x] table[x=i,y=r3] {ratios.dat};
\addlegendentry{$T_0 = d$};
\end{semilogxaxis}
\end{tikzpicture}
\end{center}
\end{remark}
This is in fact a consequence of the following result:
\begin{theorem}
Consider the following operations on pieces:
$$
C: a^b \mapsto ba ~~~~~~~~~~ L: a^b \mapsto b^2 a^2 ~~~~~~~~~~ \eta: a^b \mapsto \kappa(a)^{2b}
$$
with $\kappa: a \mapsto \dig{2}a$ for $a\in\{\dig{1}, \dig{2}, \dig{3}\}$ and $\kappa(a) = \dig{1}$ otherwise. Then $L \circ \eta = \eta \circ C$.
\end{theorem}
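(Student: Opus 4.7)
My plan is to prove this purely local identity by a direct computation on a generic piece $a^b$, viewing $C,L,\eta$ as producing strings of digits that are then re-parsed in run-length form (with adjacent same-digit pieces merged when necessary). No induction is needed: the claim is a statement about a single piece and so reduces to algebraic bookkeeping.

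For the left-hand side, $\eta(a^b) = \kappa(a)^{2b}$ is already a single piece, so $L$ applies unambiguously and yields $L\circ\eta(a^b) = (2b)^{2}\,\kappa(a)^{2}$, a two-piece run-length string in general.

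For the right-hand side a little more care is needed: $C(a^b) = ba$ is a bare two-digit string and must first be parsed into pieces before $\eta$ can act. I would split on whether $a = b$. When $a \neq b$, the string $ba$ is the two-piece run-length word $b^{1} a^{1}$, and piecewise application of $\eta$ returns $\kappa(b)^{2}\,\kappa(a)^{2}$. When $a = b$, the string collapses to the single piece $a^{2}$, and $\eta(a^{2}) = \kappa(a)^{4}$, which is exactly the specialisation $b = a$ of the previous formula (the two adjacent equal pieces merging).

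Matching the two outputs then reduces to the identity $\kappa(b) = 2b$, which is precisely how $\kappa$ is defined on $\{\dig{1},\dig{2},\dig{3}\}$; that set is the only relevant range, because the multiplicity of a piece in a classical look-and-say representation always lies there. The one housekeeping point worth checking at the end is that $(2b)^{2}\,\kappa(a)^{2}$ is already the reduced run-length form when $a \neq b$: since $\kappa$ is injective on $\{\dig{1},\dig{2},\dig{3}\}$ with image $\{\dig{2},\dig{4},\dig{6}\}$, the digits $(2b) = \kappa(b)$ and $\kappa(a)$ are distinct, so the two pieces do not silently merge. No step is genuinely hard; the only real obstacle is keeping clear the distinction between the raw digit string output by $C$ and its reduced run-length encoding, which $\eta$ operates on piecewise.
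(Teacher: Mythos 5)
Your proof is correct and follows essentially the same route as the paper's: a direct two-line computation showing both sides of the identity reduce to $\kappa(b)^2\kappa(a)^2$, using $\kappa(b)=2b$ on the relevant range $b\in\{\dig{1},\dig{2},\dig{3}\}$. You are in fact somewhat more careful than the paper, which silently writes $\eta(b^1a^1)=\eta(b^1)\eta(a^1)$ without addressing the $a=b$ case, the possible merging of output pieces, or the restriction on $b$ that the identity actually requires.
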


\begin{proof}
Let $a^b$ be a piece,
\begin{align*}
    (\eta \circ C)(a^b) & = \eta(b^1 a^1) = \eta(b^1) \eta(a^1) = \kappa(b)^2 \kappa(a)^2 \\
    (L \circ \eta)(a^b) & = L(\kappa(a)^{\kappa(b)}) = \kappa(b)^2 \kappa(a)^2.
\end{align*}
\qed 
\end{proof}
A result of this theorem is that $\lsa$ is equivalent to Conway's sequence, where $\kappa$ and $\eta$ allow us to translate from one to the other. Then $\lsa$ inherits many of the properties that are known of Conway's sequence: decomposition into \enquote{elements}, convergence to $\lambda$, and so forth.
\begin{remark}
This would also work with $L(a^b) = b^3 a^3$ and $\kappa(a) = \dig{3}a$ for $a \in \{\dig{1}, \dig{2}, \dig{3}\}$ (i.e. an $\lsa$ variant where elements are repeated three times instead of two). However the argument breaks down for a \enquote{look-and-say four times} sequence. We leave this sequence and the study of its properties open for further research.
\end{remark}

\bibliographystyle{alpha}
\bibliography{conway.bib}
\end{document}